\theoremstyle{definition}
\newtheorem*{rmk}{Remark}
\theoremstyle{plain}
\newtheorem{thm}{Theorem}[section]
\newtheorem{lem}[thm]{Lemma}
\begin{document}

\title[On the addition of squares of units modulo n]%
{On the addition of squares of units modulo n}

\author[Mohsen Mollahajiaghaei]{Mohsen Mollahajiaghaei}
\address{Department of Mathematics,
University of Western Ontario,\\ London, Ontario, Canada N6A 5B7}
\email{\href{mailto:mmollaha@uwo.ca}{mmollaha@uwo.ca}}

\subjclass[2010]{11B13, 05C50}

\keywords{Ring of residue classes; Squares of units; Adjacency matrix; Walks; Paley graph.}

\begin{abstract}
Let $\mathbb{Z}_n$ be the ring of residue classes modulo $n$, and let $\mathbb{Z}_n^{\ast}$ be the group of its units. 90 years ago, Brauer obtained a formula for the number of representations of $c\in \mathbb{Z}_n$ as the sum of $k$ units. Recently, Yang and Tang in [Q. Yang, M. Tang, On the addition of squares of units and nonunits modulo $n$, J. Number Theory., 155 (2015) 1--12] gave a formula for the number of solutions of the equation $x_1^2+x_2^2=c$ with $x_{1},x_{2}\in \mathbb{Z}_n^{\ast}$. In this paper, we generalize this result. We find an explicit formula for the number of solutions of the equation $x^2_{1}+\cdots+x^2_{k}=c$ with $x_{1},\ldots,x_{k}\in \mathbb{Z}_n^{\ast}$.

\end{abstract}
\maketitle
\setcounter{tocdepth}{1}
\section{Introduction}
Let $\mathbb{Z}_n$ be the ring of residue classes modulo $n$, and let $\mathbb{Z}_n^{\ast}$ be the group of its units. Let $c\in \mathbb{Z}_n$, and let $k$ be a positive integer. Brauer in \cite{Brauer} gave a formula for the number of solutions of the equation $x_{1}+\cdots+x_k=c$ with $x_{1},\ldots,x_{k}\in \mathbb{Z}_n^{\ast}$.  In \cite{sander} Sander found the number of representations of a fixed residue class mod $n$ as the sum of two units in
$\mathbb{Z}_{n}$, the sum of two non-units, and the sum of mixed
pairs, respectively. In \cite{Rocky} the results of Sander were generalized into an arbitrary finite commutative ring, as sum of $k$ units and sum of $k$ non-units, with a combinatorial approach.

The problem of finding explicit formulas for the number of representations of 
a natural number $n$ as the sum of $k$ squares is one of the most interesting problems in number theory. For example, if $k=4$, then Jacobi's four-square theorem states that this number is $8 \sum_{m|c}m$ if $c$ is odd and 24 times the sum of the odd divisors of $c$ if $c$ is even. See \cite{toth} and the references given there for historical remarks.

Recently, T\'{o}th \cite{toth} obtained formulas for the number of solutions of the equation 
\begin{equation*}
a_1x_{1}^{2}+\cdots+a_kx_k^2=c,
\end{equation*}
where $c\in \mathbb{Z}_n$, and $x_{i}$ and $a_i$ all belong to $\mathbb{Z}_n$.

Now, consider the equation
\begin{equation}\label{equation}
x_{1}^{2}+\cdots+x_k^2=c,
\end{equation}
where $c\in \mathbb{Z}_n$, and $x_{i}$ are all units in the ring $\mathbb{Z}_n$.
We denote the number of solutions of this equation by $\mathscr{S}_{sq}(\mathbb{Z}_n,c,k)$. In \cite{yang} Yang and Tang obtained a formula for $\mathscr{S}_{sq}(\mathbb{Z}_n,c,2)$. In this paper we provide an explicit formula for $\mathscr{S}_{sq}(\mathbb{Z}_n,c,k)$, for an arbitrary $k$. Our approach is combinatorial with the help of spectral graph theory.
\section{Preliminaries}
In this section we present some graph theoretical notions and properties used in the paper. See, e.g., the book \cite{Godsil}.
Let $G$ be an additive group with identity $0$. For $S \subseteq G$,
the \textit{Cayley graph} $X =Cay(G,S)$ is the directed graph having vertex set $V(X)=G$ and
edge set $E(X) = \{(a,b); b-a\in S\}$. Clearly, if $0\notin S$,
then there is no loop in $X$, and if $0\in S$, then there is
exactly one loop at each vertex. If $-S=\{-s; s\in S \}= S$, then there is an edge from $a$ to $b$ if and only if there is an edge from $b$ to $a$.

Let $\mathbb{Z}_n^{\ast 2}=\{x^{2}; x\in \mathbb{Z}_n^{\ast}\}$. The \textit{quadratic unitary Cayley graph} of $\mathbb{Z}_n$, $G^{2}_{\mathbb{Z}_n} = Cay(\mathbb{Z}_n;\mathbb{Z}_n^{\ast 2})$, is defined as the directed Cayley graph
on the additive group of $\mathbb{Z}_n$ with respect to $\mathbb{Z}_n^{\ast 2}$; that
is, $G^{2}_{\mathbb{Z}_n}$ has vertex set $\mathbb{Z}_n$ such that there is an edge from $x$ to $y$ if and only if $y-x\in \mathbb{Z}_n^{\ast 2}$. Then the out-degree of each vertex is $|\mathbb{Z}_n^{\ast 2}|$.

Let $G$ be a graph, and let $V(G) =\{v_1,\ldots,v_n\}$. The \textit{adjacency matrix} $A_{G}$ of $G$ is defined in a natural way. Thus, the rows and the columns of $A_{G}$ are labeled by $V(G)$. For $i,j$, if there is an edge from $v_i$ to $v_{j}$ then $a_{v_i v_j} = 1$; otherwise $a_{v_i v_j} = 0$. We will write it simply $A$ when no confusion can arise. For the graph $G^{2}_{\mathbb{Z}_n}$ the matrix $A$ is symmetric, provided that -1 is a square mod $n$.

We write $J_{m}$ for the $m\times m$ all 1-matrix. The identity $m\times m$ matrix will be denoted by $I_m$.

The complete graph on $m$ vertices with loop at each vertex is denoted by $K_{m}^{l}$. Thus, the adjacency matrix of $K_{m}^{l}$ is $J_{m}$.

A \textit{walk} in a graph $G$ is a sequence $v_{0},e_{1},v_1,e_2,\ldots,e_n,v_n$ so that $v_{i}\in V(G)$ for every $0\leq i \leq n$, and $e_i$ is an edge from $v_{i-1}$ to $v_i$, for every $1\leq i \leq n$. We denote by $w_{k}(G,i,j)$ the number of walks of length $k$ from $i$ to $j$ in the graph $G$.

One application of the adjacency matrix is to calculate the number of walks between two vertices. 
\begin{lem}\cite[Lemma 8.1.2]{Godsil}\label{walkmatrix}
Let $G$ be a directed graph, and let $k$ be a positive integer. Then the number of walks from vertex $i$ to vertex $j$ of length $k$ is the entry on row $i$ and column $j$ of the matrix $A^k$, where $A$ is the adjacency matrix.
\end{lem}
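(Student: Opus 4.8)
The plan is to argue by induction on the walk length $k$, using the definition of matrix multiplication to mirror the way a walk is assembled one edge at a time. Throughout I write $a_{ij}$ for the $(i,j)$ entry of the adjacency matrix $A$, and I recall that by definition $a_{ij}=1$ when there is an edge from $i$ to $j$ and $a_{ij}=0$ otherwise, so that $a_{ij}$ already counts the walks of length $1$ from $i$ to $j$.

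For the base case $k=1$, a walk of length $1$ from $i$ to $j$ is precisely a single edge from $i$ to $j$. Hence $w_1(G,i,j)$ equals $1$ if such an edge exists and $0$ otherwise, which is exactly $a_{ij}=(A^1)_{ij}$, and the claim holds for $k=1$.

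For the inductive step, suppose $(A^k)_{il}=w_k(G,i,l)$ for all vertices $i,l$. Every walk of length $k+1$ from $i$ to $j$ decomposes uniquely as a walk of length $k$ from $i$ to some penultimate vertex $l$, followed by a single edge from $l$ to $j$; conversely, any such walk-plus-edge pair concatenates to a walk of length $k+1$ from $i$ to $j$. Summing over the possible choices of $l$ therefore gives $w_{k+1}(G,i,j)=\sum_{l} w_k(G,i,l)\,a_{lj}$. Applying the inductive hypothesis to replace $w_k(G,i,l)$ by $(A^k)_{il}$ and recognizing the resulting sum as the $(i,j)$ entry of the product $A^k A = A^{k+1}$, we obtain $(A^{k+1})_{ij}=w_{k+1}(G,i,j)$, which completes the induction.

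The argument is essentially bookkeeping, so there is no serious obstacle; the only points requiring care are that the directed setting be respected and that the decomposition be aligned with the order of the matrix factors. Concretely, one should split off the \emph{final} edge $l\to j$ rather than the initial edge, so that the sum matches the entry of $A^k A$ rather than $A A^k$, and one should check that $a_{lj}$ correctly records the single-edge step even when loops are present, which is guaranteed by the convention that $a_{ll}=1$ exactly when there is a loop at $l$.
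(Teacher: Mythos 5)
Your induction is correct and is essentially the standard textbook argument: the paper itself gives no proof, citing Godsil--Royle (Lemma 8.1.2), whose proof is the same induction on $k$ with the decomposition $w_{k+1}(G,i,j)=\sum_{l} w_k(G,i,l)\,a_{lj}$ splitting off the final edge. Your attention to the directed setting and to loops is appropriate, and nothing is missing.
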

The next theorem provides the connection between $\mathscr{S}_{sq}(\mathbb{Z}_{p^{\alpha}},c,k)$ and $w_{k}(G^{2}_{\mathbb{Z}_{p^{\alpha}}},0,c)$.
\begin{thm}
Let $p$ be an odd prime number and $\alpha$ be a positive integer. Then 
$$\mathscr{S}_{sq}(\mathbb{Z}_{p^{\alpha}},c,k)=2^kw_{k}(G^{2}_{\mathbb{Z}_{p^{\alpha}}},0,c).$$
\end{thm}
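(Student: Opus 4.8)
The plan is to set up a fiber-counting argument that cleanly separates the ``walk'' structure from the ``square-root'' structure. First I would reinterpret the walk count combinatorially. In the Cayley graph $G^{2}_{\mathbb{Z}_{p^{\alpha}}}=Cay(\mathbb{Z}_{p^{\alpha}};\mathbb{Z}_{p^{\alpha}}^{\ast 2})$, a walk $0=v_0,v_1,\ldots,v_k$ of length $k$ starting at $0$ is determined exactly by its sequence of increments $s_i=v_i-v_{i-1}$, each of which must lie in $\mathbb{Z}_{p^{\alpha}}^{\ast 2}$; the walk terminates at $c$ precisely when $s_1+\cdots+s_k=c$. Hence $w_{k}(G^{2}_{\mathbb{Z}_{p^{\alpha}}},0,c)$ equals the number of $k$-tuples $(s_1,\ldots,s_k)\in(\mathbb{Z}_{p^{\alpha}}^{\ast 2})^k$ with $\sum_{i=1}^{k}s_i=c$. (Equivalently, one may invoke Lemma~\ref{walkmatrix} and expand the $(0,c)$-entry of $A^k$.)

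Next I would relate solutions of \eqref{equation} to these tuples via the squaring map $\Phi:(x_1,\ldots,x_k)\mapsto(x_1^2,\ldots,x_k^2)$, which sends a solution of \eqref{equation} in units to a $k$-tuple of squares of units summing to $c$. The essential point is that, because $p$ is odd, the group $\mathbb{Z}_{p^{\alpha}}^{\ast}$ is cyclic of \emph{even} order $p^{\alpha-1}(p-1)$, so the squaring homomorphism $x\mapsto x^2$ on $\mathbb{Z}_{p^{\alpha}}^{\ast}$ has kernel $\{\pm 1\}$ of size exactly $2$; thus every element of $\mathbb{Z}_{p^{\alpha}}^{\ast 2}$ has precisely two unit square roots. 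Consequently each nonempty fiber of $\Phi$ over a tuple $(s_1,\ldots,s_k)$ has exactly $2^k$ preimages, arising from the two independent choices of square root in each coordinate.

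Combining the two steps yields the count directly: $\mathscr{S}_{sq}(\mathbb{Z}_{p^{\alpha}},c,k)$ is the number of solutions of \eqref{equation}, which by the fiber analysis equals $2^k$ times the number of tuples $(s_1,\ldots,s_k)\in(\mathbb{Z}_{p^{\alpha}}^{\ast 2})^k$ with $\sum_{i=1}^{k}s_i=c$, and this last count is exactly $w_{k}(G^{2}_{\mathbb{Z}_{p^{\alpha}}},0,c)$ by the first step. This produces the claimed identity $\mathscr{S}_{sq}(\mathbb{Z}_{p^{\alpha}},c,k)=2^k w_{k}(G^{2}_{\mathbb{Z}_{p^{\alpha}}},0,c)$.

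I expect the only delicate point to be the exact two-to-one assertion, which is precisely where the hypothesis that $p$ is odd enters: it guarantees that $\phi(p^{\alpha})$ is even and that $-1\not\equiv 1\pmod{p^{\alpha}}$, so that $\{\pm 1\}$ are two distinct square roots of $1$ and squaring is uniformly two-to-one on the units. Everything else reduces to routine bijective bookkeeping.
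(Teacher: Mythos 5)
Your proposal is correct and follows essentially the same route as the paper's proof: the correspondence between solutions and walks via increments (partial sums), together with the observation that each walk lifts to exactly $2^k$ solutions by independent choices of sign. In fact your write-up is slightly more careful than the paper's, since you explicitly justify the exactly-two-to-one property of squaring on $\mathbb{Z}_{p^{\alpha}}^{\ast}$ (cyclic of even order with kernel $\{\pm 1\}$), whereas the paper only exhibits the $2^k$ sign-flipped solutions without remarking that these exhaust the fiber.
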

\begin{proof}
Consider the graph $G^{2}_{\mathbb{Z}_{p^{\alpha}}}$. Let $(x_1,\ldots,x_k)\in (\mathbb{Z}_{p^{\alpha}}^{\ast})^k$ such that $x_1^2+x_2^2+\cdots+x_{k}^2=c$. Then $0,x_{1}^2,x_1^2+x_2^2,\ldots,x_1^2+x_2^2+\cdots+x_{k}^2=c$ is a walk of length $k$ from $0$ to $c$.

Now, let $0=a_0,a_1,\ldots,a_k=c$ be a walk of length $k$. Then $a_{i}-a_{i-1}=y_{i}^{2}$, where $y_{i}\in \mathbb{Z}_{p^{\alpha}}^{\ast}$ for $i=1,\ldots,k$. Hence $y_1^2+y_2^2+\cdots+y_{k}^2=c$. Then the set $\{(\epsilon_{k}y_{1},\ldots,\epsilon_{k} y_{k}); \epsilon_{i}\in \{1,-1\}\}$ is a set of solutions of size $2^k$, which proves the theorem.
\end{proof}
The \textit{tensor product} $G_{1}\otimes G_{2}$ of two graphs
$G_{1}$ and $G_{2}$ is the graph with vertex set $V(G_{1}\otimes
G_{2}):=V(G_{1})\times V(G_{2})$, with edges specified by putting
$(u,v)$ adjacent to $(u',v')$ if and only if $u$ is adjacent to
$u'$ in $G_{1}$ and $v$ is adjacent to $v'$ in $G_{2}$. It can be easily verified that the number of edges in $G_{1}\otimes G_{2}$ is equal to the product of the number of edges in the graphs $G$ and $H$.
\begin{lem}\label{adjtensor}
The adjacency matrix of $G \otimes H$ is the tensor product of the adjacency matrices of $G$ and $H$.
\end{lem}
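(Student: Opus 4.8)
The plan is to prove the identity by comparing the two matrices entry by entry over the common index set $V(G)\times V(H)$. First I would recall the definition of the tensor (Kronecker) product of matrices: writing $A$ for the adjacency matrix of $G$ and $B$ for that of $H$, the matrix $A\otimes B$ has rows and columns indexed by ordered pairs, with $(A\otimes B)_{(u,v),(u',v')}=A_{u,u'}\,B_{v,v'}$. To make this precise one must fix an ordering of $V(G)\times V(H)$ that is compatible with the block structure of the Kronecker product, so that the block sitting in row $u$, column $u'$ is exactly the scalar $A_{u,u'}$ times $B$. This bookkeeping is the only point that requires any care.

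With the indexing fixed, I would compute the corresponding entry of $A_{G\otimes H}$ directly from the definition of the tensor product of graphs. By definition, $(u,v)$ is adjacent to $(u',v')$ in $G\otimes H$ precisely when $u$ is adjacent to $u'$ in $G$ \emph{and} $v$ is adjacent to $v'$ in $H$. Hence the entry $(A_{G\otimes H})_{(u,v),(u',v')}$ equals $1$ if and only if both $A_{u,u'}=1$ and $B_{v,v'}=1$, and it equals $0$ otherwise.

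Finally, since every entry of $A$ and $B$ lies in $\{0,1\}$, the product $A_{u,u'}\,B_{v,v'}$ equals $1$ exactly when both factors equal $1$ and equals $0$ in all other cases. This matches the value of $(A_{G\otimes H})_{(u,v),(u',v')}$ computed above, so the two matrices agree in every entry, giving $A_{G\otimes H}=A\otimes B$. The whole argument is a direct unwinding of the two definitions; the only obstacle is the purely notational one of aligning the chosen vertex ordering with the block layout of the Kronecker product, which I would resolve by adopting the lexicographic ordering on $V(G)\times V(H)$ from the outset.
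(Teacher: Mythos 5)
Your proof is correct: the paper states this lemma without any proof (it is a standard fact from algebraic graph theory), and your entry-by-entry verification of $(A_{G\otimes H})_{(u,v),(u',v')}=A_{u,u'}B_{v,v'}$ under a fixed lexicographic ordering of $V(G)\times V(H)$ is exactly the canonical argument one would supply in its place. The only point worth flagging is that the paper applies the lemma to directed graphs with loops (e.g.\ $G^{2}_{\mathbb{Z}_p}$ and $K^{l}_{m}$), but your argument carries over verbatim once ``$(u,v)$ adjacent to $(u',v')$'' is read as ``there is an edge from $(u,v)$ to $(u',v')$,'' since nothing in the computation uses symmetry of the matrices or the vanishing of diagonal entries.
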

The rest of paper is organized as follows. In section 3 we reduce the case $\mathscr{S}_{sq}(\mathbb{Z}_{n},c,k)$ to the cases $\mathscr{S}_{sq}(\mathbb{Z}_{p},c,k)$ and $\mathscr{S}_{sq}(\mathbb{Z}_{2^{\alpha}},c,k)$. We show that if $p$ is an odd prime number, then $G^{2}_{\mathbb{Z}_{p^{\alpha}}}\cong G^{2}_{\mathbb{Z}_p}\otimes K^{l}_{p^{\alpha-1}}$. Section 4 is devoted to the study of $\mathscr{S}_{sq}(\mathbb{Z}_{p},c,k)$, where $p\equiv 1 \mod 4$. In this section, we write $A^k$ as a linear combination of matrices $A$, $J_{p}$ and $I_p$, and then we obtain a formula for $\mathscr{S}_{sq}(\mathbb{Z}_{p^{\alpha}},c,k)$. Similarly, we find a formula for $\mathscr{S}_{sq}(\mathbb{Z}_{p^{\alpha}},c,k)$, where $p\equiv 3 \mod 4$, in section 5. Last section, provides an explicit formula for $\mathscr{S}_{sq}(\mathbb{Z}_{2^{\alpha}},c,k)$ by direct counting.

\section{General results}
In this section, we reduce the case $\mathscr{S}_{sq}(\mathbb{Z}_{n},c,k)$ to the cases $\mathscr{S}_{sq}(\mathbb{Z}_{p},c,k)$ and $\mathscr{S}_{sq}(\mathbb{Z}_{2^{\alpha}},c,k)$.

The next lemma shows that the function $n \rightarrow \mathscr{S}_{sq}(\mathbb{Z}_n,c,k)$ is multiplicative.
\begin{lem}\label{product}
Let $m,n$ be coprime numbers. Then $\mathscr{S}_{sq}(\mathbb{Z}_{mn},c,k)=\mathscr{S}_{sq}(\mathbb{Z}_{m},c,k)\cdot\mathscr{S}_{sq}(\mathbb{Z}_{n},c,k)$.
\end{lem}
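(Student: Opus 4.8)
The plan is to exploit the Chinese Remainder Theorem. Since $\gcd(m,n)=1$, the map $\phi\colon \mathbb{Z}_{mn} \to \mathbb{Z}_m \times \mathbb{Z}_n$ given by $\phi(x) = (x \bmod m,\, x \bmod n)$ is a ring isomorphism. First I would record the standard fact that this isomorphism carries the unit group $\mathbb{Z}_{mn}^{\ast}$ onto $\mathbb{Z}_m^{\ast} \times \mathbb{Z}_n^{\ast}$: an element $x$ is invertible modulo $mn$ precisely when its reductions are simultaneously invertible modulo $m$ and modulo $n$. Thus $\phi$ restricts to a bijection between $\mathbb{Z}_{mn}^{\ast}$ and $\mathbb{Z}_m^{\ast} \times \mathbb{Z}_n^{\ast}$.

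Next I would set up the induced bijection on solution tuples. Applying $\phi$ in each coordinate and reordering the factors yields a bijection $\Phi\colon (\mathbb{Z}_{mn}^{\ast})^k \to (\mathbb{Z}_m^{\ast})^k \times (\mathbb{Z}_n^{\ast})^k$. Because $\phi$ is a ring homomorphism, it commutes with squaring and with addition, so for any tuple $(x_1,\ldots,x_k)$ the equation $x_1^2+\cdots+x_k^2 = c$ in $\mathbb{Z}_{mn}$ holds if and only if $\phi(x_1^2+\cdots+x_k^2) = \phi(c)$. Reading off the two components, this is equivalent to requiring both $(x_1 \bmod m)^2 + \cdots + (x_k \bmod m)^2 = c \bmod m$ in $\mathbb{Z}_m$ and $(x_1 \bmod n)^2 + \cdots + (x_k \bmod n)^2 = c \bmod n$ in $\mathbb{Z}_n$.

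Then I would conclude that $\Phi$ restricts to a bijection between the solution set of $x_1^2+\cdots+x_k^2 = c$ in $(\mathbb{Z}_{mn}^{\ast})^k$ and the Cartesian product of the corresponding solution set in $(\mathbb{Z}_m^{\ast})^k$ with the corresponding solution set in $(\mathbb{Z}_n^{\ast})^k$. Counting the sizes of both sides gives $\mathscr{S}_{sq}(\mathbb{Z}_{mn},c,k) = \mathscr{S}_{sq}(\mathbb{Z}_{m},c,k)\cdot \mathscr{S}_{sq}(\mathbb{Z}_{n},c,k)$, as claimed.

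The argument is essentially routine, and I do not anticipate a genuine obstacle; all the content lies in unpacking the CRT identification. The only two points requiring a word of justification are that the isomorphism restricts correctly to the unit groups and that the single equation over $\mathbb{Z}_{mn}$ is genuinely equivalent to the pair of equations over $\mathbb{Z}_m$ and $\mathbb{Z}_n$. Both follow immediately from $\phi$ being a ring isomorphism: it preserves the multiplicative structure, hence units and squares, and it preserves the additive structure, hence the sums.
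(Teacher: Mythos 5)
Your proposal is correct and follows exactly the route the paper intends: the paper's entire proof is the one-line remark that the result ``follows using the Chinese remainder theorem,'' and your argument is simply that remark unpacked in full detail (the CRT ring isomorphism, its restriction to unit groups, and the induced bijection on solution tuples). Nothing in your write-up diverges from or adds risk to the paper's approach; it is a complete and correct expansion of it.
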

\begin{proof}
The proof follows using the Chinese remainder theorem.
\end{proof}
\begin{lem}\label{squarequotient}
Let $p$ be an odd prime number, and let $m$ be the ideal generated by $p$ in the ring $\mathbb{Z}_{p^{\alpha}}$. Let $u\in \mathbb{Z}_{p^{\alpha}}^{\ast 2}$ and $r\in m$. Then $u+r\in \mathbb{Z}_{p^{\alpha}}^{\ast 2}$.
\end{lem}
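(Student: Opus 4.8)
The plan is to exploit the local structure of $\mathbb{Z}_{p^{\alpha}}$ together with a lifting criterion that detects squares of units by reduction modulo $p$. First I would record that $\mathbb{Z}_{p^{\alpha}}$ is a local ring whose unique maximal ideal is precisely $m=(p)$. Since $u$ is a unit and $r\in m$, it follows immediately that $u+r$ is again a unit, because a unit plus an element of the maximal ideal is never a zero divisor. Thus the only thing that requires work is showing that this unit is in fact a \emph{square} of a unit.

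The key step is the following criterion: a unit $v\in\mathbb{Z}_{p^{\alpha}}^{\ast}$ lies in $\mathbb{Z}_{p^{\alpha}}^{\ast 2}$ if and only if its image $\bar v$ under the natural reduction homomorphism $\pi\colon\mathbb{Z}_{p^{\alpha}}^{\ast}\to\mathbb{Z}_{p}^{\ast}$ is a square in $\mathbb{Z}_{p}^{\ast}$. To establish this I would examine $\ker\pi=1+m$, which has order $p^{\alpha-1}$. Because $p$ is odd, this order is odd, so the squaring map is a bijection on the abelian group $\ker\pi$; in particular every element of $\ker\pi$ is itself a square of an element of $\ker\pi$. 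Granting this, if $\bar v=\bar w^{2}$ for some $w\in\mathbb{Z}_{p}^{\ast}$, then lifting $w$ to a unit $\tilde w\in\mathbb{Z}_{p^{\alpha}}^{\ast}$ gives $v\tilde w^{-2}\in\ker\pi$, which is a square, say $z^{2}$; hence $v=(z\tilde w)^{2}\in\mathbb{Z}_{p^{\alpha}}^{\ast 2}$. The reverse implication is clear since $\pi$ is multiplicative.

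Finally I would apply the criterion to $v=u+r$. As $r\in m=(p)$, reduction gives $\overline{u+r}=\bar u$ in $\mathbb{Z}_{p}^{\ast}$, and writing $u=t^{2}$ with $t\in\mathbb{Z}_{p^{\alpha}}^{\ast}$ shows $\bar u=\bar t^{\,2}$ is a square in $\mathbb{Z}_{p}^{\ast}$. Therefore $\overline{u+r}$ is a square in $\mathbb{Z}_{p}^{\ast}$, and by the criterion $u+r\in\mathbb{Z}_{p^{\alpha}}^{\ast 2}$, as required.

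I expect the main obstacle to be the lifting criterion, and more precisely the assertion that every element of $1+m$ is a square. This is exactly the point at which the hypothesis that $p$ is odd is indispensable: it forces $|\ker\pi|=p^{\alpha-1}$ to be odd, so that squaring is bijective on $\ker\pi$. The argument breaks down for $p=2$, which is consistent with the separate and more delicate treatment of $\mathscr{S}_{sq}(\mathbb{Z}_{2^{\alpha}},c,k)$ promised in the final section.
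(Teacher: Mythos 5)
Your proof is correct, but it takes a somewhat different route from the paper's. The paper first reduces to the case $u=1$ (writing $u+r=t^{2}\left(1+t^{-2}r\right)$, it suffices to show $1+r\in\mathbb{Z}_{p^{\alpha}}^{\ast 2}$ for $r\in m$), and then produces an explicit square root: since $r$ is nilpotent, $1+r$ lies in the $p$-group $1+m$, so $(1+r)^{p^{\lambda}}=1$ for large $\lambda$, whence $(1+r)^{p^{\lambda}+1}=1+r$; as $p$ is odd the exponent $p^{\lambda}+1$ is even, so $1+r=\bigl((1+r)^{(p^{\lambda}+1)/2}\bigr)^{2}$ (the paper leaves this last parity step implicit). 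You instead prove the stronger lifting criterion that a unit of $\mathbb{Z}_{p^{\alpha}}$ is a square if and only if its reduction mod $p$ is, via the observation that $\ker\pi=1+m$ has odd order $p^{\alpha-1}$ so squaring is a bijection on it, and then apply the criterion to $u+r$, whose reduction equals $\bar u=\bar t^{\,2}$. The two arguments rest on the same engine --- every element of the odd-order group $1+m$ is a square, and your bijectivity claim is exactly what the paper's exponent $(p^{\lambda}+1)/2$ inverts --- but your packaging buys a reusable Hensel-type criterion and makes the role of $p$ odd fully explicit, while the paper's version is shorter and gives the square root in closed form. Your closing remark that the argument must fail for $p=2$ is also accurate and consistent with the paper's separate treatment of $\mathbb{Z}_{2^{\alpha}}$.
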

\begin{proof}
For this to happen, it is enough to show that $1+r$ belongs to $\mathbb{Z}_{p^{\alpha}}^{\ast 2}$. We know that $r$ is a nilpotent element of $\mathbb{Z}_{p^{\alpha}}$. Let $\lambda$ be a sufficiently large integer. Then $(1+r)^{p^{\lambda}}=1$. Hence, $(1+r)^{p^{\lambda}+1}=1+r$.
\end{proof}
\begin{thm}\label{tensorprodjacob}
Let $p$ be an odd prime number, and let $\alpha$ be a positive integer. Then $G^{2}_{\mathbb{Z}_{p^{\alpha}}}\cong G^{2}_{\mathbb{Z}_p}\otimes K^{l}_{p^{\alpha-1}}$.
\end{thm}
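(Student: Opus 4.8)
The plan is to realize the vertex set $\mathbb{Z}_{p^{\alpha}}$ as a Cartesian product via reduction modulo $p$, and to observe that the only information the adjacency relation of $G^{2}_{\mathbb{Z}_{p^{\alpha}}}$ retains is the pair of residues modulo $p$. The loop-and-completeness structure of $K^{l}_{p^{\alpha-1}}$ is exactly what encodes the freedom in the remaining coordinate.

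First I would identify the connection set. Consider the canonical ring surjection $\pi\colon \mathbb{Z}_{p^{\alpha}} \to \mathbb{Z}_{p}$, whose kernel is the ideal $m=(p)$ of size $p^{\alpha-1}$. Since $\pi$ sends units to units and commutes with squaring, one has $\pi(\mathbb{Z}_{p^{\alpha}}^{\ast 2})=\mathbb{Z}_{p}^{\ast 2}$: any $t^{2}\in\mathbb{Z}_{p}^{\ast 2}$ is the image of the square of any unit lift of $t$. On the other hand, Lemma \ref{squarequotient} says precisely that $\mathbb{Z}_{p^{\alpha}}^{\ast 2}$ is stable under adding elements of $m$, hence is a union of full cosets of $m$. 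Combining these two facts gives $\mathbb{Z}_{p^{\alpha}}^{\ast 2}=\pi^{-1}(\mathbb{Z}_{p}^{\ast 2})$. Consequently, for $x,y\in\mathbb{Z}_{p^{\alpha}}$ we have $y-x\in\mathbb{Z}_{p^{\alpha}}^{\ast 2}$ if and only if $\bar{y}-\bar{x}\in\mathbb{Z}_{p}^{\ast 2}$, so adjacency in $G^{2}_{\mathbb{Z}_{p^{\alpha}}}$ depends only on residues modulo $p$.

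Next I would set up the bijection. Writing each $x\in\{0,\ldots,p^{\alpha}-1\}$ uniquely as $x=a+pb$ with $0\le a<p$ and $0\le b<p^{\alpha-1}$, define $\phi(x)=(a,b)\in\mathbb{Z}_{p}\times V(K^{l}_{p^{\alpha-1}})$, where the vertex set of $K^{l}_{p^{\alpha-1}}$ is identified with $\{0,\ldots,p^{\alpha-1}-1\}$. This $\phi$ is a bijection, and $\bar{x}=a$. To see it is a graph isomorphism, let $\phi(y)=(a',b')$. By the previous paragraph, $x\sim y$ in $G^{2}_{\mathbb{Z}_{p^{\alpha}}}$ exactly when $a'-a\in\mathbb{Z}_{p}^{\ast 2}$. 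In the tensor product $G^{2}_{\mathbb{Z}_{p}}\otimes K^{l}_{p^{\alpha-1}}$, the pair $(a,b)$ is adjacent to $(a',b')$ iff $a'-a\in\mathbb{Z}_{p}^{\ast 2}$ in the first factor \emph{and} $b$ is adjacent to $b'$ in $K^{l}_{p^{\alpha-1}}$; but the second condition holds for every choice of $b,b'$ because $K^{l}_{p^{\alpha-1}}$ is complete with a loop at each vertex. The two adjacency conditions therefore coincide, so $\phi$ is the desired isomorphism.

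The real content sits in the first step, the identification $\mathbb{Z}_{p^{\alpha}}^{\ast 2}=\pi^{-1}(\mathbb{Z}_{p}^{\ast 2})$, which is supplied by Lemma \ref{squarequotient}; once that is in hand the remainder is bookkeeping. The only subtlety to watch is that all graphs here are \emph{directed}, so the adjacency test must be applied with the correct orientation (the difference $y-x$, respectively $a'-a$). This causes no difficulty, since the second factor imposes no constraint on the coordinates $b,b'$, and so no orientation issue can arise from it.
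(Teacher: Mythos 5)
Your proof is correct and follows the same basic route as the paper: the identical decomposition of $\mathbb{Z}_{p^{\alpha}}$ into cosets of $m=(p)$ (your $x=a+pb$ is the paper's $r=r_i+n_r$), the same bijection, and the same reliance on Lemma \ref{squarequotient}. The one genuine difference is how the converse direction of edge-preservation is closed. The paper verifies only that its map $\psi$ sends edges to edges, and then finishes by observing that the two graphs have the same number of directed edges (namely $\frac{1}{2}p^{2\alpha-1}(p-1)$, using that the tensor product multiplies edge counts), so that an injective edge-preserving bijection between them is forced to be an isomorphism. You instead establish the stronger set identity $\mathbb{Z}_{p^{\alpha}}^{\ast 2}=\pi^{-1}(\mathbb{Z}_{p}^{\ast 2})$ up front: Lemma \ref{squarequotient} gives saturation of $\mathbb{Z}_{p^{\alpha}}^{\ast 2}$ under addition of elements of $\ker\pi$, and the lifting of units along $\pi$ (valid since an element of $\mathbb{Z}_{p^{\alpha}}$ is a unit iff its reduction mod $p$ is nonzero) gives $\pi(\mathbb{Z}_{p^{\alpha}}^{\ast 2})=\mathbb{Z}_{p}^{\ast 2}$ --- a small point the paper never states explicitly but implicitly needs for its edge count to come out equal. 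With the preimage identity in hand, adjacency in $G^{2}_{\mathbb{Z}_{p^{\alpha}}}$ is \emph{equivalent} to adjacency of the first coordinates in $G^{2}_{\mathbb{Z}_p}$, so you get a biconditional at every step and no counting argument is required. Both proofs are complete; yours is arguably the tighter write-up, since the paper's counting step is asserted rather than computed, while your version makes the isomorphism transparent in one line once the connection set is identified. Your closing remark about directedness is also apt: the orientation bookkeeping is harmless precisely because the second factor $K^{l}_{p^{\alpha-1}}$ imposes no constraint.
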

\begin{proof}
Let $m$ be the ideal generated by $p$, and $\mathbb{Z}_{p^{\alpha}}=\bigcup_{i=1}^{p}(m+r_{i})$, where $m+r_{i}$ is a coset of the maximal ideal $m$ in $\mathbb{Z}_{p^{\alpha}}$. The ring $\mathbb{Z}_{p^{\alpha}}/m$ is isomorphic to the field $\mathbb{Z}_{p}$. Then for each $r\in \mathbb{Z}_{p^{\alpha}}$ there is a unique $i$ and $n_r\in m$ such that $r=r_i+n_r$. Let $\psi:G^{2}_{\mathbb{Z}_{p^{\alpha}}} \longrightarrow G^{2}_{\mathbb{Z}_{p}}\otimes K^{l}_{p^{\alpha-1}}$ be defined by $\psi(r):=(r_i+m,n_r)$. Obviously, this map is a bijection. Now, let $(r,r')$ be a directed edge in $G^{2}_{\mathbb{Z}_{p^{\alpha}}}$. We show that $(\psi(r),\psi(r'))$ is also a directed edge in $G^{2}_{\mathbb{Z}_{p}}\otimes K^{l}_{p^{\alpha-1}}$. By definition, $\psi(r)=(r_{i}+m,n_{r})$ and $\psi(r')=(r_{j}+m,n_{r'})$. We have $r'-r\in \mathbb{Z}_{p^{\alpha}}^{\ast 2}$. Thus, $r_{j}-r_{i}+n_{r'}-n_{r}\in \mathbb{Z}_{p^{\alpha}}^{\ast 2}$. Hence by Lemma \ref{squarequotient}, $r_{j}-r_{i}\in \mathbb{Z}_{p^{\alpha}}^{\ast 2}$. Then $r_{j}-r_{i}+m\in (\mathbb{Z}_{p^{\alpha}}/ m)^{\ast 2}$. Since the number of edges of $G^{2}_{\mathbb{Z}_{p^{\alpha}}}$ and $G^{2}_{\mathbb{Z}_p}\otimes K^{l}_{p^{\alpha-1}}$ are the same, the proof is complete.
\end{proof}
By the aforementioned theorem, we see
\begin{equation*}
\begin{split}
A^{k}_{G^{2}_{\mathbb{Z}_{p^{\alpha}}}}&=
A_{G^{2}_{\mathbb{Z}_p} }^{k}\otimes A_{K^{l}_{p^{\alpha-1}}}^{k}\\
&=A^{k}_{G^{2}_{\mathbb{Z}_p}}\otimes J_{p^{\alpha-1}}^{k}.\\
\end{split}
\end{equation*}
\section{\texorpdfstring{$\mathscr{S}_{sq}(\mathbb{Z}_{p^{\alpha}},c,k)$}{p=1mod4} where \texorpdfstring{$p\equiv 1 \mod 4$}{p=1mod4}}
In this section, we find $\mathscr{S}_{sq}(\mathbb{Z}_{p^{\alpha}},c,k)$, where $p$ is a prime number with $p\equiv 1 \mod 4$.

An {\it strongly regular graph} with parameters $(n,k,
\lambda,\mu)$ is a simple graph with $n$ vertices that is regular of
valency $k$ and has the following properties:

$\bullet$ For any two adjacent vertices $x,y$, there are exactly
$\lambda$ vertices adjacent to both $x$ and $y$.

$\bullet$ For any two non-adjacent vertices $x,y$, there are
exactly $\mu$ vertices adjacent to both $x$ and $y$.

Let $p$ be a fixed prime number with $p \equiv 1 \mod 4$. The Paley graph $P_{p}$
is defined by taking the field $\mathbb{Z}_p$ as vertex set, with two vertices $x$ and $y$ joined by an edge if and only if $x-y$ is a nonzero square in $\mathbb{Z}_p$.

As in well known (see e.g., \cite[Page 221]{Godsil}), the Paley graph is strongly regular with parameters $(p,\frac{p-1}{2},\frac{p-5}{4},\frac{p-1}{4})$. The fact that Paley graph is strongly regular shows that $A^2$ can be written as a linear combination of matrices $A$, $J_p$ and $I_p$.

\begin{lem}\cite[Page 219]{Godsil}
Let $p$ be a prime number such that $p \equiv 1\mod 4$. Then the adjacency matrix of the Paley graph $P_{p}$ satisfies
\begin{equation}\label{main}
A_{P_{p}}^2=-A_{P_{p}}+(\frac{p-1}{4})J_p+(\frac{p-1}{4})I_p.
\end{equation}
\end{lem}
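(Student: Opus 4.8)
The plan is to read off each entry of $A_{P_p}^2$ as a count of walks of length $2$ and then translate the three resulting cases into a single matrix identity. Since $p \equiv 1 \mod 4$, the element $-1$ is a square in $\mathbb{Z}_p$, so the set of nonzero squares is closed under negation and $P_p$ is undirected; its adjacency matrix $A := A_{P_p}$ is therefore symmetric with zero diagonal (there are no loops, as $0$ is not a nonzero square). By Lemma \ref{walkmatrix}, the $(x,y)$ entry of $A^2$ is the number of walks of length $2$ from $x$ to $y$, which for an undirected graph is precisely the number of common neighbours of $x$ and $y$.

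I would then split into the three cases that define a strongly regular graph, using the parameters $(p,\tfrac{p-1}{2},\tfrac{p-5}{4},\tfrac{p-1}{4})$ recorded just above the statement. When $x=y$, every neighbour $z$ of $x$ produces the closed walk $x\to z\to x$, so each diagonal entry of $A^2$ equals the valency $\tfrac{p-1}{2}$. When $x$ and $y$ are adjacent (that is, $y-x$ is a nonzero square) they have exactly $\lambda=\tfrac{p-5}{4}$ common neighbours, and when $x\neq y$ are non-adjacent they have exactly $\mu=\tfrac{p-1}{4}$ common neighbours.

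These three cases assemble into one equation once the supporting position-patterns are identified: the diagonal is captured by $I_p$, the adjacent off-diagonal positions by $A$ itself, and the remaining (distinct, non-adjacent) positions by the complementary matrix $J_p-I_p-A$. This yields
\begin{equation*}
A^2 = \tfrac{p-1}{2}\,I_p + \tfrac{p-5}{4}\,A + \tfrac{p-1}{4}\,(J_p-I_p-A).
\end{equation*}
Collecting terms leaves coefficient $\tfrac{p-5}{4}-\tfrac{p-1}{4}=-1$ on $A$, coefficient $\tfrac{p-1}{4}$ on $J_p$, and coefficient $\tfrac{p-1}{2}-\tfrac{p-1}{4}=\tfrac{p-1}{4}$ on $I_p$, which is exactly \eqref{main}.

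The only genuine content is the validity of the common-neighbour counts $\lambda$ and $\mu$, i.e. the strong regularity of $P_p$, which I am taking from the reference cited before the statement. Were one to prove this from scratch, the main obstacle would be counting, for a fixed nonzero $d=y-x$, the representations $d=s+t$ with $s$ and $t$ both nonzero squares; this is a routine quadratic-character (Jacobi-sum) computation whose only delicate point is correctly discarding the boundary contributions where $s=0$, $t=0$, or $s=d$.
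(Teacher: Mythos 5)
Your proof is correct and takes essentially the same route as the paper, which does not prove the lemma itself but cites the strong regularity of $P_p$ with parameters $(p,\frac{p-1}{2},\frac{p-5}{4},\frac{p-1}{4})$ and notes that this fact immediately yields the identity --- precisely your assembly $A^2=\frac{p-1}{2}I_p+\frac{p-5}{4}A+\frac{p-1}{4}(J_p-I_p-A)$ followed by collecting coefficients. Your arithmetic checks out, and your observation that $p\equiv 1 \bmod 4$ makes $-1$ a square (so the graph is genuinely undirected and walk counts are common-neighbour counts) correctly handles the one point where the hypothesis enters.
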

Although the graph $G_{\mathbb{Z}_p}^{2}$ is a directed graph and $P_p$ is a simple graph, they share the same adjacency matrix. Then $A_{G_{\mathbb{Z}_p}^{2}}^{n}$ can be written as a linear combination of $A_{G_{\mathbb{Z}_p}^{2}}$, $I_{p}$ and $J_{p}$.\\
Let
\begin{equation}
A^{n+1}=a_{n,p}A+b_{n,p}J_p+c_{n,p}I_p.
\end{equation}
Then
\begin{equation*}
A^{n+2}=a_{n,p}A^2+\frac{p-1}{2}b_{n,p}J_p+c_{n,p}A.
\end{equation*}
Now, by equation (\ref{main}), we have
\begin{equation*}
A^{n+2}=(a_{n,p}a_{1,p}+c_{n,p})A+(\frac{p-1}{2}b_{n,p}+a_{n,p}b_{1,p})J_p+(a_{n,p}c_{1,p})I_p.
\end{equation*}
Then we see that
\begin{displaymath}
\left\{ \begin{array}{ll}
a_{n+1,p}=a_{n,p}a_{1,p}+c_{n,p},& a_{1,p}=-1, a_{2,p}=\frac{p+3}{4};\\
b_{n+1,p}=\frac{p-1}{2}b_{n,p}+a_{n,p}b_{1,p},&b_{1,p}=\frac{p-1}{4}, b_{2,p}=(\frac{p-1}{4})(\frac{p-3}{2});\\
c_{n+1,p}=a_{n,p}c_{1,p}, &c_{1,p}=\frac{p-1}{4}, c_{2,p}=-\frac{p-1}{4}.\
\end{array} \right.
\end{displaymath}
From the first and last equations, we have the following homogeneous linear recurrence relation
$$a_{n,p}=\frac{p-1}{4}a_{n-2,p}-a_{n-1,p}.$$
Since $a_1=-1$ and $a_2=\frac{p+3}{4}$, we deduce
\begin{equation*}
a_{n,p}=(\dfrac{\sqrt{p}-1}{2\sqrt{p}}) (\dfrac{-1+\sqrt{p}}{2})^n+(\dfrac{\sqrt{p}+1}{2\sqrt{p}}) (\dfrac{-1-\sqrt{p}}{2})^n.
\end{equation*}
Then
\begin{equation}\label{i}\tag{i}
a_{n,p}=\Big( \dfrac{1}{2^{n+1}\sqrt{p}} \Big)\Big( (-1+\sqrt{p})^{n+1} + (-1)^{n} (1+\sqrt{p})^{n+1} \Big).
\end{equation}
Now, we have
\begin{equation*}\label{ii}\tag{ii}
c_{n,p}=\Big( \dfrac{p-1}{2^{n+2}\sqrt{p}} \Big)\Big( (-1+\sqrt{p})^{n} + (-1)^{n-1} (1+\sqrt{p})^{n} \Big).
\end{equation*}
Thus, for $b_{n,p}$ we have the following non-homogeneous linear recurrence relation
\begin{equation*}
b_{n,p}=\dfrac{p-1}{2}b_{n-1,p}+\Big( \dfrac{p-1}{2^{n+1}\sqrt{p}} \Big)\Big( (-1+\sqrt{p})^{n-1} + (-1)^{n-2} (1+\sqrt{p})^{n-1} \Big).
\end{equation*}
Then
\begin{equation*}
b_{n,p}=\beta (\dfrac{p-1}{2})^n+
\Big( \dfrac{p-1}{2^{n+1}\sqrt{p}} \Big)\Big( \dfrac{\sqrt{p}+1}{\sqrt{p}-1}(-1+\sqrt{p})^{n-1} + (-1)^{n-2}\dfrac{\sqrt{p}-1}{\sqrt{p}+1} (1+\sqrt{p})^{n-1} \Big).
\end{equation*}
Since $b_1=\frac{p-1}{4}$, it follows that
\begin{equation*}\label{iii}\tag{iii}
b_{n,p}=\dfrac{(p-5)}{2(p-1)}(\dfrac{p-1}{2})^n+
\Big( \dfrac{p-1}{2^{n+1}\sqrt{p}} \Big)\Big( \dfrac{\sqrt{p}+1}{\sqrt{p}-1}(-1+\sqrt{p})^{n-1} + (-1)^{n-2}\dfrac{\sqrt{p}-1}{\sqrt{p}+1} (1+\sqrt{p})^{n-1} \Big).
\end{equation*}
We can now find $\mathscr{S}_{sq}(\mathbb{Z}_{p},c,k)$.
\begin{equation}\label{field4k1}
\mathscr{S}_{sq}(\mathbb{Z}_{p},c,k)=\left\{ \begin{array}{ll}
2^k (b_{k-1,p}+c_{k-1,p}),& \textrm{if $c = 0$};\\
2^k (a_{k-1,p}+b_{k-1,p}),& \textrm{if $c=x^2$, for some $x\in \mathbb{Z}_p^{\ast}$};\\
2^k b_{k-1,p},&\textrm{otherwise}.\
\end{array} \right.
\end{equation}
The last theorem of this section provides a formula for $\mathscr{S}_{sq}(\mathbb{Z}_{p^{\alpha}},c,k)$.
\begin{thm}\label{4k1}
Let $p$ be a prime number such that $p\equiv 1 \mod 4$. Let $\alpha$ be a positive integer. Then \begin{displaymath}
\mathscr{S}_{sq}(\mathbb{Z}_{p^{\alpha}},c,k)=\left\{ \begin{array}{ll}
p^{(\alpha -1)(k-1)}2^k (b_{k-1,p}+c_{k-1,p}),& \textrm{if $c \equiv 0 \mod p$};\\
p^{(\alpha -1)(k-1)}2^k (a_{k-1,p}+b_{k-1,p}),&\textrm{if $c=x^2$, for some $x\in \mathbb{Z}_{p^{\alpha}}^{\ast}$};\\
p^{(\alpha -1)(k-1)} 2^k b_{k-1,p},&\textrm{otherwise},\
\end{array} \right.
\end{displaymath}
where $a_{k-1,p}$, $c_{k-1,p}$ and $b_{k-1,p}$ are defined by equations (\ref{i}), (\ref{ii}) and (\ref{iii}), respectively, (putting $n=k-1$).
\end{thm}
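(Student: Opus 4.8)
The plan is to reduce the computation over $\mathbb{Z}_{p^{\alpha}}$ to the formula \eqref{field4k1} already established over $\mathbb{Z}_p$, using the tensor decomposition of Theorem \ref{tensorprodjacob}. By the theorem of Section 2 expressing $\mathscr{S}_{sq}$ in terms of walk counts, together with Lemma \ref{walkmatrix}, it suffices to evaluate the $(0,c)$ entry of $A^{k}_{G^{2}_{\mathbb{Z}_{p^{\alpha}}}}$. Combining Theorem \ref{tensorprodjacob} with Lemma \ref{adjtensor} gives, as recorded in the display following that theorem,
\begin{equation*}
A^{k}_{G^{2}_{\mathbb{Z}_{p^{\alpha}}}}=A^{k}_{G^{2}_{\mathbb{Z}_p}}\otimes J_{p^{\alpha-1}}^{k}.
\end{equation*}

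First I would simplify the second factor. Since $J_m^2=mJ_m$, an easy induction gives $J_m^k=m^{k-1}J_m$, hence $J_{p^{\alpha-1}}^{k}=p^{(\alpha-1)(k-1)}J_{p^{\alpha-1}}$ and
\begin{equation*}
A^{k}_{G^{2}_{\mathbb{Z}_{p^{\alpha}}}}=p^{(\alpha-1)(k-1)}\Bigl(A^{k}_{G^{2}_{\mathbb{Z}_p}}\otimes J_{p^{\alpha-1}}\Bigr).
\end{equation*}
Under the bijection $\psi$ of Theorem \ref{tensorprodjacob}, the vertex $0$ corresponds to $(0+m,0)$ and $c$ to $(\bar{c},n_c)$, where $\bar{c}$ is the image of $c$ in $\mathbb{Z}_{p^{\alpha}}/m\cong\mathbb{Z}_p$. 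Reading off the entry of the tensor product indexed by this pair, and using that every entry of $J_{p^{\alpha-1}}$ equals $1$, I obtain
\begin{equation*}
w_{k}(G^{2}_{\mathbb{Z}_{p^{\alpha}}},0,c)=p^{(\alpha-1)(k-1)}\,w_{k}(G^{2}_{\mathbb{Z}_p},0,\bar{c}).
\end{equation*}
Multiplying by $2^k$ and applying the walk-count theorem on both sides yields $\mathscr{S}_{sq}(\mathbb{Z}_{p^{\alpha}},c,k)=p^{(\alpha-1)(k-1)}\mathscr{S}_{sq}(\mathbb{Z}_p,\bar{c},k)$, and I then substitute \eqref{field4k1}.

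The remaining step is to check that the three cases of the target theorem align with the three cases of \eqref{field4k1} applied to $\bar{c}$. The condition $c\equiv 0\bmod p$ is exactly $\bar{c}=0$. When $c$ is a unit, it is a square in $\mathbb{Z}_{p^{\alpha}}^{\ast}$ if and only if $\bar{c}$ is a nonzero square in $\mathbb{Z}_p$: the forward implication is immediate, and the converse is the content of Lemma \ref{squarequotient}, which lifts a square modulo $p$ to a square modulo $p^{\alpha}$. This identifies the second case and forces the third (where $\bar{c}$ is a nonzero nonsquare) as the complementary case, completing the substitution.

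I expect the only genuine subtlety to be the bookkeeping in the middle paragraph: confirming that under $\psi$ the pair indexing $c$ has its $K^{l}_{p^{\alpha-1}}$-coordinate inside the all-ones block of $J_{p^{\alpha-1}}$, so that this factor contributes the clean scalar $p^{(\alpha-1)(k-1)}$ independently of $c$. Everything else is the routine substitution of the closed forms (\ref{i}), (\ref{ii}), (\ref{iii}) and the standard lifting of quadratic residues.
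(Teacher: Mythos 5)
Your proposal is correct and follows essentially the same route as the paper's proof: both combine Theorem \ref{tensorprodjacob} and Lemma \ref{adjtensor} to obtain $A^{k}_{G^{2}_{\mathbb{Z}_{p^{\alpha}}}}=A^{k}_{G^{2}_{\mathbb{Z}_p}}\otimes p^{(\alpha-1)(k-1)}J_{p^{\alpha-1}}$, and then conclude via the $2^k$-walk correspondence, Lemma \ref{walkmatrix}, and equation (\ref{field4k1}). You merely spell out the bookkeeping the paper leaves implicit, namely the indexing under $\psi$ and the alignment of the three cases via the lifting of squares in Lemma \ref{squarequotient}.
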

\begin{proof}
By Theorem \ref{tensorprodjacob} and Lemma \ref{adjtensor}, $A_{G^{2}_{\mathbb{Z}_{p^{\alpha}}}}= A_{G^{2}_{\mathbb{Z}_p}}\otimes A_{K^{l}_{p^{\alpha-1}}}$. Then  
\begin{equation*}
\begin{split}
A^{k}_{G^{2}_{\mathbb{Z}_{p^{\alpha}}}}&= A^{k}_{G^{2}_{\mathbb{Z}_p}}\otimes J_{p^{\alpha-1}}^{k}\\
&=A^{k}_{G^{2}_{\mathbb{Z}_p}}\otimes p^{(\alpha-1)(k-1)}J_{p^{\alpha-1}}.\\
\end{split}
\end{equation*}
Then equation (\ref{field4k1}) and Lemma \ref{walkmatrix}, complete the proof.
\end{proof}
\section{\texorpdfstring{$\mathscr{S}_{sq}(\mathbb{Z}_{p^{\alpha}},c,k)$}{p=3mod4} where \texorpdfstring{$p\equiv 3 \mod 4$}{p=3mod4}}
In this section, we find $\mathscr{S}_{sq}(\mathbb{Z}_{p^{\alpha}},c,k)$, where $p$ is a prime number with $p\equiv 3 \mod 4$. The main idea is similar to that used in the previous section. We try to write $A_{G_{\mathbb{Z}_{p}}^{2}}^2$ as a linear combination of matrices $A_{G_{\mathbb{Z}_{p}}^{2}}$, $I_{p}$ and $J_p$.

The field $\mathbb{Z}_p$, has no square root of -1. Then for each pair of $(x,y)$ of distinct elements of $\mathbb{Z}_p$, either $x-y$ or $y-x$, but not both, is a square of a nonzero element.
Hence in the graph $G_{\mathbb{Z}_{p}}^{2}$, each pair of distinct vertices is linked by an arc in one and only one direction. Therefore, $A_{G_{\mathbb{Z}_{p}}^{2}}+A_{G_{\mathbb{Z}_{p}}^{2}}^T=J_p-I_p$. The entry on row $a$ and column $b$ of the matrix $A_{G_{\mathbb{Z}_{p}}^{2}}^2$ equals to the size of the set $(a+\mathbb{Z}_{p}^{\ast 2})\cap (b-\mathbb{Z}_{p}^{\ast 2})$. The goal of following lemmas is to find $|(a+\mathbb{Z}_{p}^{\ast 2})\cap (b-\mathbb{Z}_{p}^{\ast 2})|$.
\begin{lem}\label{jam}
Let $a$ and $b$ be elements of $\mathbb{Z}_p$. Then $|(a+\mathbb{Z}_{p}^{\ast 2})\cap (b-\mathbb{Z}_{p}^{\ast 2})|=|(a-b+\mathbb{Z}_{p}^{\ast 2})\cap -\mathbb{Z}_{p}^{\ast 2}|$.
\end{lem}
\begin{proof}
Let $\psi: (a+\mathbb{Z}_{p}^{\ast 2})\cap (b-\mathbb{Z}_{p}^{\ast 2}) \longrightarrow (a-b+\mathbb{Z}_{p}^{\ast 2})\cap -\mathbb{Z}_{p}^{\ast 2}$ be defined by $\psi(r)=r-b$. Obviously, $\psi$ is well-defined and injective. Now, let $c\in (a-b+\mathbb{Z}_{p}^{\ast 2})\cap -\mathbb{Z}_{p}^{\ast 2}$, so there exists $s\in \mathbb{Z}_{p}^{\ast 2}$ such that $c=a-b+s$. Then $\psi(c+b)=c$, which completes the proof.
\end{proof}
\begin{lem}\label{zarb}
Let $a$ be a non-zero element of $\mathbb{Z}_p$. Then $|(a^2+\mathbb{Z}_{p}^{\ast 2})\cap -\mathbb{Z}_{p}^{\ast 2}|=|(1+\mathbb{Z}_{p}^{\ast 2})\cap -\mathbb{Z}_{p}^{\ast 2}|$ and $|(-a^2+\mathbb{Z}_{p}^{\ast 2})\cap -\mathbb{Z}_{p}^{\ast 2}|=|(-1+\mathbb{Z}_{p}^{\ast 2})\cap -\mathbb{Z}_{p}^{\ast 2}|$.
\end{lem}
\begin{proof}
Let $\psi: (a^2+\mathbb{Z}_{p}^{\ast 2})\cap -\mathbb{Z}_{p}^{\ast 2} \longrightarrow (1+\mathbb{Z}_{p}^{\ast 2})\cap -\mathbb{Z}_{p}^{\ast 2}$ be defined by $\psi(r)=ra^{-2}$. Obviously, $\psi$ is well-defined and injective. Now, let $c\in (1+\mathbb{Z}_{p}^{\ast 2})\cap -\mathbb{Z}_{p}^{\ast 2}$. Thus, there exists $s\in \mathbb{Z}_p^{\ast}$ such that $c=1+s^2$. Then $\psi(ca^{2})=c$, which completes the proof.

The proof for the second part is similar.
\end{proof}
Then by Lemmas \ref{jam} and \ref{zarb}, one can easily see that $A^2$ is a linear combination of matrices $A$, $J_p$ and $I_p$.
\begin{lem}\label{lambda}
$|(1+\mathbb{Z}_{p}^{\ast 2})\cap (-\mathbb{Z}_{p}^{\ast 2})|=\frac{p+1}{4}$.
\end{lem}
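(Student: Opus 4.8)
The plan is to recast the cardinality as a count of quadratic characters and evaluate it with a short character sum. Write $\chi$ for the Legendre symbol modulo $p$, so that $\chi(x)=1$ if $x$ is a nonzero square, $\chi(x)=-1$ if $x$ is a nonzero non-square, and $\chi(0)=0$. Since $p\equiv 3\pmod 4$ we have $\chi(-1)=-1$, so $-\mathbb{Z}_{p}^{\ast 2}$ is exactly the set of nonzero non-squares. Hence a residue $z$ lies in $(1+\mathbb{Z}_{p}^{\ast 2})\cap(-\mathbb{Z}_{p}^{\ast 2})$ precisely when $\chi(z-1)=1$ and $\chi(z)=-1$; in particular any such $z$ differs from both $0$ and $1$. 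Writing $N$ for the cardinality to be found, I would detect these two conditions for $z\neq 0,1$ by the indicators $\tfrac{1-\chi(z)}{2}$ and $\tfrac{1+\chi(z-1)}{2}$, obtaining
\begin{equation*}
4N=\sum_{z\neq 0,1}\bigl(1-\chi(z)\bigr)\bigl(1+\chi(z-1)\bigr)=\sum_{z\neq 0,1}\Bigl(1+\chi(z-1)-\chi(z)-\chi(z)\chi(z-1)\Bigr).
\end{equation*}

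Next I would evaluate each of the four resulting sums over the whole of $\mathbb{Z}_{p}$ and afterwards correct for the two excluded values. The constant term sums to $p$, while $\sum_{z}\chi(z)$ and $\sum_{z}\chi(z-1)$ both vanish because $\chi$ is a nontrivial character. The only substantial piece is the twisted sum $\sum_{z}\chi(z)\chi(z-1)$. For $z\neq 0$ one has $\chi(z)\chi(z-1)=\chi\bigl(z(z-1)\bigr)=\chi\bigl(1-z^{-1}\bigr)$, and as $z$ ranges over the nonzero residues $1-z^{-1}$ ranges over all residues except $1$; thus this sum equals $\sum_{u\neq 1}\chi(u)=-\chi(1)=-1$. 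Consequently the bracketed expression, summed over all $z\in\mathbb{Z}_{p}$, equals $p+0-0-(-1)=p+1$.

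Finally I would subtract the contributions at the excluded points. At $z=0$ the bracket reduces to $1+\chi(-1)$ and at $z=1$ it reduces to $1-\chi(1)$; since $\chi(-1)=-1$ and $\chi(1)=1$, both are zero. Therefore $4N=p+1$, that is $N=\tfrac{p+1}{4}$, as asserted. I expect the main obstacle to be the evaluation of $\sum_{z}\chi(z)\chi(z-1)$, for which the inversion substitution $z\mapsto z^{-1}$ is the crucial device; it is also worth noting that the hypothesis $p\equiv 3\pmod 4$ enters solely through $\chi(-1)=-1$ at the boundary term $z=0$, and it is precisely this that produces $\tfrac{p+1}{4}$ rather than the value $\tfrac{p-3}{4}$ one would obtain in the $p\equiv 1\pmod 4$ case.
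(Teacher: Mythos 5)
Your proof is correct, but it takes a genuinely different route from the paper's. You translate membership into Legendre-symbol conditions and compute $4N=\sum_{z\neq 0,1}\bigl(1-\chi(z)\bigr)\bigl(1+\chi(z-1)\bigr)$ using the vanishing of $\sum_z\chi(z)$ together with the standard evaluation $\sum_{z\neq 0}\chi(z)\chi(z-1)=\sum_{u\neq 1}\chi(u)=-1$ via the substitution $\chi(z)\chi(z-1)=\chi(1-z^{-1})$; the excluded terms at $z=0,1$ vanish because $\chi(-1)=-1$ and $\chi(1)=1$. The paper argues instead by elementary set manipulation, with no characters: since $-1$ is a nonsquare, $1+\mathbb{Z}_{p}^{\ast 2}$ splits disjointly into its intersections with $\mathbb{Z}_{p}^{\ast 2}$ and $-\mathbb{Z}_{p}^{\ast 2}$, so it suffices to count $|(1+\mathbb{Z}_{p}^{\ast 2})\cap\mathbb{Z}_{p}^{\ast 2}|$; the solutions of $1+b^2=c^2$ are parametrized through $(c-b)(c+b)=1$ by $c=\frac{u+u^{-1}}{2}$, $b=\frac{u-u^{-1}}{2}$ with $u\in\mathbb{Z}_{p}^{\ast}\setminus\{1,-1\}$, the four parameters $u,u^{-1},-u,-u^{-1}$ collapsing to one element, which gives $\frac{p-3}{4}$ and hence $\frac{p-1}{2}-\frac{p-3}{4}=\frac{p+1}{4}$. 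What each buys: your character-sum computation is more uniform and immediately yields the companion count of Lemma \ref{mu} (and the $p\equiv 1\pmod 4$ analogues) by merely flipping signs in the indicators, while the paper's parametrization argument is self-contained and character-free, in keeping with its combinatorial approach.

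One inaccuracy in your closing aside, harmless to the lemma itself: the hypothesis $p\equiv 3\pmod 4$ does not enter solely at the boundary term $z=0$ --- it already determines the indicator $\frac{1-\chi(z)}{2}$ for membership in $-\mathbb{Z}_{p}^{\ast 2}$. For $p\equiv 1\pmod 4$ that indicator becomes $\frac{1+\chi(z)}{2}$, both boundary terms change, and the analogous count $|(1+\mathbb{Z}_{p}^{\ast 2})\cap(-\mathbb{Z}_{p}^{\ast 2})|=|(1+\mathbb{Z}_{p}^{\ast 2})\cap\mathbb{Z}_{p}^{\ast 2}|$ comes out to $\frac{p-5}{4}$ (the Paley parameter $\lambda$), not $\frac{p-3}{4}$.
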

\begin{proof}
We know that $\Big((1+\mathbb{Z}_{p}^{\ast 2})\cap (-\mathbb{Z}_{p}^{\ast 2})\Big)\cup \Big((1+\mathbb{Z}_{p}^{\ast 2})\cap
(\mathbb{Z}_{p}^{\ast 2})\Big)=1+\mathbb{Z}_{p}^{\ast 2}$, and $\Big((1+\mathbb{Z}_{p}^{\ast 2})\cap (-\mathbb{Z}_{p}^{\ast 2})\Big)\cap \Big((1+\mathbb{Z}_{p}^{\ast 2})\cap
(\mathbb{Z}_{p}^{\ast 2})\Big)=\emptyset$. Then $|(1+\mathbb{Z}_{p}^{\ast 2})\cap
(-\mathbb{Z}_{p}^{\ast 2})|=\frac{p-1}{2}-|(1+\mathbb{Z}_{p}^{\ast 2})\cap (\mathbb{Z}_{p}^{\ast 2})|$. Now, $a\in (1+\mathbb{Z}_{p}^{\ast 2})\cap (\mathbb{Z}_{p}^{\ast 2})$
if and only there exist $b,c\in \mathbb{Z}_p^{\ast}$ such that
$a=1+b^2=c^2$. Thus, $(c-b)(c+b)=1$. Hence $c=\frac{u+u^{-1}}{2}$ and $b=\frac{u-u^{-1}}{2}$,
for $u\in \mathbb{Z}_p^{\ast}-\{1,-1\}$. Then $(1+\mathbb{Z}_{p}^{\ast 2})\cap (\mathbb{Z}_{p}^{\ast 2})=\{ (\frac{u+u^{-1}}{2})^2 ; u\in \mathbb{Z}_p^{\ast}\}-\{1\}$.

If $(\frac{u+u^{-1}}{2})^2=(\frac{v+v^{-1}}{2})^2$, then we have two cases:
\begin{itemize}
\item[(i)] $\frac{u+u^{-1}}{2}=\frac{v+v^{-1}}{2}$. A trivial verification shows that $u=v$ or $u=v^{-1}$.
\item[(ii)] $\frac{u+u^{-1}}{2}=-\frac{v+v^{-1}}{2}$. Then $u=-v$ or $u=-v^{-1}$.
\end{itemize}
Then $|(1+\mathbb{Z}_{p}^{\ast 2})\cap (\mathbb{Z}_{p}^{\ast 2})|=\frac{p-1-2}{4}$, and the lemma follows.
\end{proof}
The following lemma may be proved in much the same way as Lemma \ref{lambda}.
\begin{lem}\label{mu}
$|(-1+\mathbb{Z}_{p}^{\ast 2})\cap (-\mathbb{Z}_{p}^{\ast 2})|=\frac{p-3}{4}$.
\end{lem}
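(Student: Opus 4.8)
The plan is to mimic the proof of Lemma \ref{lambda}, but with one structural twist I must not overlook. Since $p \equiv 3 \mod 4$, the element $-1$ is a nonsquare, so $\mathbb{Z}_{p}^{\ast}$ is the disjoint union of $\mathbb{Z}_{p}^{\ast 2}$ and $-\mathbb{Z}_{p}^{\ast 2}$. First I would note that the translate $-1+\mathbb{Z}_{p}^{\ast 2}$ has exactly $\frac{p-1}{2}$ elements. The crucial difference from Lemma \ref{lambda} is that $0 = -1+1 \in -1+\mathbb{Z}_{p}^{\ast 2}$ (whereas $0 \notin 1+\mathbb{Z}_{p}^{\ast 2}$, since $-1$ is a nonsquare). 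Removing this single zero element, the remaining $\frac{p-3}{2}$ nonzero elements split into $(-1+\mathbb{Z}_{p}^{\ast 2}) \cap \mathbb{Z}_{p}^{\ast 2}$ and $(-1+\mathbb{Z}_{p}^{\ast 2}) \cap (-\mathbb{Z}_{p}^{\ast 2})$, whence
\[
\abs{(-1+\mathbb{Z}_{p}^{\ast 2}) \cap (-\mathbb{Z}_{p}^{\ast 2})} = \frac{p-3}{2} - \abs{(-1+\mathbb{Z}_{p}^{\ast 2}) \cap \mathbb{Z}_{p}^{\ast 2}}.
\]
Thus the whole problem reduces, exactly as before, to counting $\abs{(-1+\mathbb{Z}_{p}^{\ast 2}) \cap \mathbb{Z}_{p}^{\ast 2}}$.

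For this count I would observe that $a \in (-1+\mathbb{Z}_{p}^{\ast 2}) \cap \mathbb{Z}_{p}^{\ast 2}$ if and only if there are $b,c \in \mathbb{Z}_{p}^{\ast}$ with $a = -1+b^2 = c^2$, equivalently $b^2 - c^2 = 1$, i.e.\ $(b-c)(b+c)=1$. Writing $b-c = u^{-1}$ and $b+c = u$ for $u \in \mathbb{Z}_{p}^{\ast}$ gives $b = \frac{u+u^{-1}}{2}$ and $c = \frac{u-u^{-1}}{2}$. Here $b \ne 0$ is automatic (as $b=0$ would force $u^2 = -1$, which is impossible), while $c \ne 0$ excludes exactly $u = \pm 1$. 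Hence $(-1+\mathbb{Z}_{p}^{\ast 2}) \cap \mathbb{Z}_{p}^{\ast 2} = \set{(\frac{u-u^{-1}}{2})^2 ; u \in \mathbb{Z}_{p}^{\ast} \setminus \{1,-1\}}$.

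Finally I would determine the multiplicity of this parametrization. A short computation, as in Lemma \ref{lambda}, shows $(\frac{u-u^{-1}}{2})^2 = (\frac{v-v^{-1}}{2})^2$ if and only if $v \in \{u, u^{-1}, -u, -u^{-1}\}$. On $\mathbb{Z}_{p}^{\ast} \setminus \{1,-1\}$ this four-element group action is free: the only possible fixed points solve $u = \pm u^{-1}$ or $u = -u$, forcing $u \in \{1,-1\}$ or $u^2 = -1$, all excluded. Therefore the $p-3$ admissible values of $u$ fall into orbits of size exactly $4$, giving $\frac{p-3}{4}$ distinct values, so that $\abs{(-1+\mathbb{Z}_{p}^{\ast 2}) \cap \mathbb{Z}_{p}^{\ast 2}} = \frac{p-3}{4}$. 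Substituting back yields $\abs{(-1+\mathbb{Z}_{p}^{\ast 2}) \cap (-\mathbb{Z}_{p}^{\ast 2})} = \frac{p-3}{2} - \frac{p-3}{4} = \frac{p-3}{4}$, as claimed. The one point to watch is the bookkeeping around the zero element: forgetting that $0 \in -1+\mathbb{Z}_{p}^{\ast 2}$ (and that the role of the excluded $u = \pm 1$ now comes from $c=0$ rather than from $b=0$) would shift the final count, so that is where I would be most careful.
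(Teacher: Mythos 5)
Your proof is correct and is precisely the argument the paper intends: the paper omits the proof of Lemma \ref{mu}, stating only that it ``may be proved in much the same way as Lemma \ref{lambda}'', and your adaptation follows that template exactly (translate, split the nonzero elements into squares and nonsquares, parametrize via $(b-c)(b+c)=1$, count $4$-to-$1$). You also correctly identified the one genuine difference from Lemma \ref{lambda} --- that $0\in -1+\mathbb{Z}_{p}^{\ast 2}$ must be removed before splitting, and that the exclusion $u=\pm 1$ now arises from $c=0$ rather than $b=0$ --- which is exactly the bookkeeping that makes the analogous count come out to $\frac{p-3}{2}-\frac{p-3}{4}=\frac{p-3}{4}$.
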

\begin{lem}\label{adjp3mod4}
Let $p$ be a prime number with $p\equiv 3 \mod 4$. Let $A$ be the adjacency matrix of the graph $G_{\mathbb{Z}_{p}}^{2}$. Then
\begin{equation} \label{Asqp3mod4}
A^2=-A+(\frac{p+1}{4})J_p-(\frac{p+1}{4})I_p.
\end{equation}
\end{lem}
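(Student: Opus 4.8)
```latex
\textbf{Proof proposal.} The plan is to compute the diagonal and off-diagonal entries of $A^2$ separately, using the interpretation of $(A^2)_{ab}$ as $|(a+\mathbb{Z}_{p}^{\ast 2})\cap (b-\mathbb{Z}_{p}^{\ast 2})|$ already recorded before Lemma \ref{jam}, and then assemble the three coefficients of $A$, $J_p$ and $I_p$ by matching entries. Since $A^2$ is a real linear combination $A^2=\alpha A+\beta J_p+\gamma I_p$ (which the preceding lemmas guarantee), it suffices to determine $\alpha,\beta,\gamma$ from three independent entry computations: the diagonal entry, an entry $(a,b)$ with $b-a\in\mathbb{Z}_p^{\ast 2}$ (where $A_{ab}=1$), and an entry $(a,b)$ with $b-a\in-\mathbb{Z}_p^{\ast 2}$ (where $A_{ab}=0$).

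First I would handle the diagonal. The entry $(A^2)_{aa}$ counts walks of length $2$ from $a$ to itself, i.e. $|(a+\mathbb{Z}_{p}^{\ast 2})\cap(a-\mathbb{Z}_{p}^{\ast 2})|=|\mathbb{Z}_{p}^{\ast 2}\cap(-\mathbb{Z}_{p}^{\ast 2})|$. Because $p\equiv 3\bmod 4$, $-1$ is a nonsquare, so $\mathbb{Z}_{p}^{\ast 2}$ and $-\mathbb{Z}_{p}^{\ast 2}$ are disjoint, giving diagonal value $0$. In the ansatz this reads $\gamma+\beta=0$ (the $A$-term vanishes on the diagonal), so $\gamma=-\beta$, consistent with the claimed $\beta=\tfrac{p+1}{4}$ and $\gamma=-\tfrac{p+1}{4}$.

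Next, for the two off-diagonal cases I would reduce a general entry to the normalized quantities computed in Lemmas \ref{jam}, \ref{zarb}, \ref{lambda} and \ref{mu}. By Lemma \ref{jam} the entry $(a,b)$ equals $|(a-b+\mathbb{Z}_{p}^{\ast 2})\cap(-\mathbb{Z}_{p}^{\ast 2})|$, which depends only on $a-b$. If $b-a\in\mathbb{Z}_p^{\ast 2}$, then $a-b=-s^2$ for some $s\ne 0$, and Lemma \ref{zarb} (second part) together with Lemma \ref{mu} gives the value $\tfrac{p-3}{4}$; matching the ansatz on such an entry yields $\alpha+\beta=\tfrac{p-3}{4}$. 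If instead $b-a\in-\mathbb{Z}_p^{\ast 2}$, then $a-b=s^2$, and Lemma \ref{zarb} (first part) with Lemma \ref{lambda} gives $\tfrac{p+1}{4}$; here $A_{ab}=0$, so the ansatz forces $\beta=\tfrac{p+1}{4}$. Combining, $\gamma=-\beta=-\tfrac{p+1}{4}$ and $\alpha=\tfrac{p-3}{4}-\beta=\tfrac{p-3}{4}-\tfrac{p+1}{4}=-1$, which is exactly equation (\ref{Asqp3mod4}).

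I do not expect a serious obstacle here, since the heavy lifting is done by the earlier lemmas; the only care required is bookkeeping to ensure each off-diagonal type is assigned to the correct normalized count (the sign of $a-b$ relative to $\mathbb{Z}_p^{\ast 2}$ must be tracked carefully, as it determines whether Lemma \ref{lambda} or Lemma \ref{mu} applies). The mildly delicate point is justifying that a single representative entry of each type suffices, which rests on the already-established fact that $(A^2)_{ab}$ depends only on $a-b$ and that $A$, $J_p$, $I_p$ are linearly independent enough to pin down the three coefficients from these three equations.
```
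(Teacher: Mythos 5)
Your proposal is correct and follows essentially the same route as the paper: both compute $(A^2)_{ab}$ entrywise via Lemma \ref{jam}, normalize with Lemma \ref{zarb}, and invoke Lemmas \ref{lambda} and \ref{mu} for the two off-diagonal cases and disjointness of $\mathbb{Z}_p^{\ast 2}$ and $-\mathbb{Z}_p^{\ast 2}$ for the diagonal. Your ansatz-and-solve framing is a cosmetic difference only (and the linear-independence worry you flag is unnecessary, since your three entry computations already verify the claimed identity entry by entry, every pair $(a,b)$ falling into one of the three types because exactly one of $\pm(b-a)$ is a square when $p\equiv 3 \bmod 4$).
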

\begin{proof}
Let $a,b \in\mathbb{Z}_{p}$.  By Lemma \ref{jam}, $$(A)_{ab}=|(a+\mathbb{Z}_{p}^{\ast 2})\cap (b-\mathbb{Z}_{p}^{\ast 2})|=|(a-b+\mathbb{Z}_{p}^{\ast 2})\cap (-\mathbb{Z}_{p}^{\ast 2})|.$$

If there is an edge from $a$ to $b$, then by Lemmas \ref{zarb} and \ref{mu},
$$(A)_{ab}=|(-1+\mathbb{Z}_{p}^{\ast 2})\cap (-\mathbb{Z}_{p}^{\ast 2})|=\frac{p-3}{4}.$$
If $a\neq b$ and there is no edge from $a$ to $b$, then by a similar argument, we have $(A)_{ab}=\frac{p+1}{4}$. If $a=b$, then by Lemma \ref{jam}, $$(A)_{ab}=|(a+\mathbb{Z}_{p}^{\ast 2})\cap (b-\mathbb{Z}_{p}^{\ast 2})|=|(\mathbb{Z}_{p}^{\ast 2})\cap (-\mathbb{Z}_{p}^{\ast 2})|=0,$$
which establishes equation (\ref{Asqp3mod4}).
\end{proof}
Let \begin{equation*}
A^{n+1}=a_{n,p}A+b_{n,p}J_p+c_{n,p}I_p.
\end{equation*}
Hence
\begin{equation*}
A^{n+1}=a_{n,p}A^2+b_{n,p}\frac{p-1}{2}J_p+c_{n,p}A.
\end{equation*}
Then
\begin{equation*}
A^{n+1}=(c_{n+1,p}-a_{n,p})A+(a_{n,p}\frac{p+1}{4}+b_{n+1,p}\frac{p-1}{2})J_p+(-a_{n,p} \frac{p+1}{4})I_p.
\end{equation*}
Thus, we have
\begin{displaymath}
\left\{ \begin{array}{ll}
a_{n+1,p}=c_{n,p}-a_{n,p},& a_{1,p}=-1, a_{2,p}=\frac{3-p}{4};\\
b_{n+1,p}=\frac{p-1}{2}b_{n,p}+a_{n,p}\frac{p+1}{4},&b_{1,p}=\frac{p+1}{4}, b_{2,p}=\frac{p+1}{4}(\frac{p-1}{2}-1);\\
c_{n+1,p}=-a_{n,p}\frac{p+1}{4}, &c_{1,p}=-\frac{p+1}{4}, c_{2,p}=\frac{p+1}{4}.\
\end{array} \right.
\end{displaymath}
From the first and last equations, we have the following homogeneous linear recurrence relation
$$a_{n+1,p}+a_{n,p}+\frac{p+1}{4}a_{n-1,p}=0.$$
Since $a_{1,p}=-1$ and $a_{2,p}=\frac{3-p}{4}$, we deduce
\begin{equation*}\label{i'}\tag{i'}
a_{n,p}=(\frac{\sqrt{p}+i}{2\sqrt{p}})(\dfrac{-1+i\sqrt{p}}{2})^n+(\frac{\sqrt{p}-i}{2\sqrt{p}})(\dfrac{-1-i\sqrt{p}}{2})^n,
\end{equation*}
where $i=\sqrt{-1}$. Then
\begin{equation*}\label{ii'}\tag{ii'}
c_{n,p}=-\frac{p+1}{4}\Big((\frac{\sqrt{p}+i}{2\sqrt{p}})(\dfrac{-1+i\sqrt{p}}{2})^{n-1}+(\frac{\sqrt{p}-i}{2\sqrt{p}})(\dfrac{-1-i\sqrt{p}}{2})^{n-1}\Big).
\end{equation*}
Thus, for $b_{n,p}$ we have the following non-homogeneous linear recurrence relation
\begin{equation*}
b_{n,p}=\dfrac{p-1}{2}b_{n-1,p}+\frac{p+1}{4}\Big((\frac{\sqrt{p}+i}{2\sqrt{p}})(\dfrac{-1+i\sqrt{p}}{2})^{n-1}+(\frac{\sqrt{p}-i}{2\sqrt{p}})(\dfrac{-1-i\sqrt{p}}{2})^{n-1}\Big).
\end{equation*}
Then
\begin{equation*}
b_{n,p}=\alpha(\frac{p-1}{2})^n+\frac{p+1}{8\sqrt{p}}\Big((\frac{(\sqrt{p}+i)(i\sqrt{p}-1)}{i\sqrt{p}-p})(\dfrac{-1+i\sqrt{p}}{2})^{n-1}+(\frac{(\sqrt{p}-i)(i\sqrt{p}+1)}{i\sqrt{p}+p})(\dfrac{-1-i\sqrt{p}}{2})^{n-1}\Big).
\end{equation*}
Since $b_{1,p}=\frac{p+1}{4}$, it follows that
\begin{equation*}\label{iii'}\tag{iii'}
b_{n,p}=\frac{p-1}{2p}(\frac{p-1}{2})^n+\frac{p+1}{8\sqrt{p}}\Big((\frac{(\sqrt{p}+i)(i\sqrt{p}-1)}{i\sqrt{p}-p})(\dfrac{-1+i\sqrt{p}}{2})^{n-1}+(\frac{(\sqrt{p}-i)(i\sqrt{p}+1)}{i\sqrt{p}+p})(\dfrac{-1-i\sqrt{p}}{2})^{n-1}\Big).
\end{equation*}
Then the number of solutions of the equation (\ref{equation}) is
\begin{displaymath}
\mathscr{S}_{sq}(\mathbb{Z}_{p},c,k)=\left\{ \begin{array}{ll}
2^k (b_{k-1,p}+c_{k-1,p}),& \textrm{if $c=0$};\\
2^k (a_{k-1.p}+b_{k-1,p}),&\textrm{if $c=x^2$, for some $x\in \mathbb{Z}_p^{\ast}$};\\
2^k b_{k-1,p},&\textrm{otherwise}.\
\end{array} \right.
\end{displaymath}
\begin{thm}\label{np4k3}
Let $p$ be a prime number such that $p\equiv 3 \mod 4$. Let $\alpha$ be a positive integer. Then \begin{displaymath}
\mathscr{S}_{sq}(\mathbb{Z}_{p^{\alpha}},c,k)=\left\{ \begin{array}{ll}
p^{(\alpha -1)(k-1)}2^k (b_{k-1,p}+c_{k-1,p}),& \textrm{if $c\equiv 0 \mod p$};\\
p^{(\alpha -1)(k-1)}2^k (a_{k-1,p}+b_{k-1,p}),&\textrm{if $c=x^2$, for some $x\in \mathbb{Z}_{p^{\alpha}}^{\ast}$};\\
p^{(\alpha -1)(k-1)} 2^k b_{k-1,p},&\textrm{otherwise},\
\end{array} \right.
\end{displaymath}
where $a_{k-1,p}$, $c_{k-1,p}$ and $b_{k-1,p}$ are defined by equations (\ref{i'}), (\ref{ii'}) and (\ref{iii'}), respectively, (putting $n=k-1$).
\end{thm}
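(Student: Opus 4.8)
The plan is to run exactly the argument already used for Theorem~\ref{4k1}, since the reduction from $\mathbb{Z}_{p^{\alpha}}$ to $\mathbb{Z}_{p}$ is structural and insensitive to the residue of $p$ modulo $4$; the only quantities that change are the closed forms for $a_{k-1,p}$, $b_{k-1,p}$, $c_{k-1,p}$, which here are supplied by (\ref{i'}), (\ref{ii'}), (\ref{iii'}) instead of (\ref{i}), (\ref{ii}), (\ref{iii}). First I would invoke Theorem~\ref{tensorprodjacob} together with Lemma~\ref{adjtensor} to write
\[
A_{G^{2}_{\mathbb{Z}_{p^{\alpha}}}} = A_{G^{2}_{\mathbb{Z}_{p}}}\otimes A_{K^{l}_{p^{\alpha-1}}} = A_{G^{2}_{\mathbb{Z}_{p}}}\otimes J_{p^{\alpha-1}},
\]
using that the adjacency matrix of $K^{l}_{p^{\alpha-1}}$ is the all-ones matrix.

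Next I would raise this identity to the $k$-th power. Since $(A\otimes B)^{k}=A^{k}\otimes B^{k}$, and since $J_{m}^{2}=mJ_{m}$ forces $J_{m}^{k}=m^{k-1}J_{m}$ by induction, I obtain
\[
A^{k}_{G^{2}_{\mathbb{Z}_{p^{\alpha}}}} = A^{k}_{G^{2}_{\mathbb{Z}_{p}}}\otimes J_{p^{\alpha-1}}^{k} = p^{(\alpha-1)(k-1)}\left( A^{k}_{G^{2}_{\mathbb{Z}_{p}}}\otimes J_{p^{\alpha-1}} \right).
\]
I then apply the theorem of Section~2 giving $\mathscr{S}_{sq}(\mathbb{Z}_{p^{\alpha}},c,k)=2^{k}w_{k}(G^{2}_{\mathbb{Z}_{p^{\alpha}}},0,c)$, which is valid for any odd $p$ because each walk lifts to $2^{k}$ solutions via the independent sign choices on the square roots, together with Lemma~\ref{walkmatrix}, which identifies $w_{k}(G^{2}_{\mathbb{Z}_{p^{\alpha}}},0,c)$ with the entry of $A^{k}_{G^{2}_{\mathbb{Z}_{p^{\alpha}}}}$ in row $0$ and column $c$.

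The computational heart is extracting that single entry. Under the isomorphism $\psi$ of Theorem~\ref{tensorprodjacob} one has $\psi(0)=(0,0)$ and $\psi(c)=(\bar c, n_{c})$, where $\bar c$ is the image of $c$ in $\mathbb{Z}_{p}$, so the sought entry factors as the $(0,\bar c)$ entry of $A^{k}_{G^{2}_{\mathbb{Z}_{p}}}$ times the $(0,n_{c})$ entry of $J_{p^{\alpha-1}}^{k}$; the latter equals $p^{(\alpha-1)(k-1)}$ independently of $n_{c}$. Substituting the linear combination $A^{k}_{G^{2}_{\mathbb{Z}_{p}}}=a_{k-1,p}A+b_{k-1,p}J_{p}+c_{k-1,p}I_{p}$ and reading off the $(0,\bar c)$ entry, the $J_{p}$ term always contributes $b_{k-1,p}$, the $I_{p}$ term contributes $c_{k-1,p}$ exactly when $\bar c=0$, and the $A$ term contributes $a_{k-1,p}$ exactly when there is an arc $0\to\bar c$ in $G^{2}_{\mathbb{Z}_{p}}$, that is, when $\bar c\in\mathbb{Z}_{p}^{\ast 2}$. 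This reproduces the three-case formula for $\mathscr{S}_{sq}(\mathbb{Z}_{p},c,k)$ displayed immediately above, now uniformly scaled by $p^{(\alpha-1)(k-1)}$.

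The one point requiring genuine care, though it is not deep, is aligning the trichotomy stated for $c\in\mathbb{Z}_{p^{\alpha}}$ with the trichotomy for $\bar c\in\mathbb{Z}_{p}$. The condition $c\equiv 0\pmod p$ is precisely $\bar c=0$. For the middle case I must verify that ``$c=x^{2}$ for some $x\in\mathbb{Z}_{p^{\alpha}}^{\ast}$'' is equivalent to ``$\bar c$ is a nonzero square in $\mathbb{Z}_{p}$''. One direction is immediate by reducing modulo $p$. For the converse, $\bar c\neq 0$ already forces $c$ to be a unit, and lifting any $x$ with $\bar x^{2}=\bar c$ gives $x^{2}\in\mathbb{Z}_{p^{\alpha}}^{\ast 2}$ with $c-x^{2}\in m$, whence $c=x^{2}+(c-x^{2})\in\mathbb{Z}_{p^{\alpha}}^{\ast 2}$ by Lemma~\ref{squarequotient}. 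With this identification the ``otherwise'' cases coincide as well, and substituting (\ref{i'}), (\ref{ii'}), (\ref{iii'}) with $n=k-1$ completes the proof.
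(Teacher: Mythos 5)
Your proposal is correct and follows essentially the same route as the paper, whose proof of Theorem~\ref{np4k3} simply refers to the argument for Theorem~\ref{4k1}: the tensor decomposition from Theorem~\ref{tensorprodjacob} and Lemma~\ref{adjtensor}, the identity $J_{m}^{k}=m^{k-1}J_{m}$, and Lemma~\ref{walkmatrix} combined with the $2^{k}$ walk-to-solution correspondence. Your explicit verification that the trichotomy on $c\in\mathbb{Z}_{p^{\alpha}}$ matches the trichotomy on $\bar c\in\mathbb{Z}_{p}$ via Lemma~\ref{squarequotient} is a detail the paper leaves implicit, and it is a welcome addition rather than a deviation.
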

\begin{proof}
The proof is similar to that of Theorem \ref{4k1}.
\end{proof}
\section{\texorpdfstring{$\mathscr{S}_{sq}(\mathbb{Z}_{2^{\alpha}},c,k)$}{2t}}
In this section we find $\mathscr{S}_{sq}(\mathbb{Z}_{2^{\alpha}},c,k)$. For $\alpha=1$ and $\alpha=2$, this number is easy to find.
\begin{lem}\label{8k}
Let $n=2^\alpha$ such that $\alpha>2$. Then $\mathbb{Z}_{n}^{\ast 2}=\Big\{8k+1;k\in \{0,\ldots,\frac{n}{8}-1\}\Big\}$.
\end{lem}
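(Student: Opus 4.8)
The plan is to prove the asserted equality by first establishing the easy containment and then matching cardinalities. For the containment $\mathbb{Z}_{n}^{\ast 2}\subseteq\{8k+1 : 0\le k\le \tfrac{n}{8}-1\}$, I would take an arbitrary unit $x$, write it as $x=2m+1$, and compute $x^2=4m(m+1)+1$. Since $m(m+1)$ is a product of consecutive integers it is even, so $x^2\equiv 1\pmod 8$; hence every square of a unit lies in the stated set. The right-hand side clearly has exactly $n/8=2^{\alpha-3}$ elements (this is where $\alpha>2$ is used), so it remains to show $|\mathbb{Z}_{n}^{\ast 2}|=2^{\alpha-3}$, after which equality follows from the containment.

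To count $|\mathbb{Z}_n^{\ast 2}|$ I would view it as the image of the squaring homomorphism $\sigma\colon\mathbb{Z}_n^{\ast}\to\mathbb{Z}_n^{\ast}$, $\sigma(x)=x^2$, which is a homomorphism because $\mathbb{Z}_n^{\ast}$ is abelian. Every fiber of $\sigma$ has the same size, equal to the number of square roots of $1$: if $x^2=a$, then $y^2=a$ if and only if $(yx^{-1})^2=1$. Consequently
\[
|\mathbb{Z}_n^{\ast 2}|=\frac{|\mathbb{Z}_n^{\ast}|}{N},\qquad N:=|\{z\in\mathbb{Z}_n^{\ast}:z^2=1\}|,
\]
where $|\mathbb{Z}_n^{\ast}|=2^{\alpha-1}$. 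Thus the whole computation reduces to determining $N$.

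The crux is to show $N=4$ when $\alpha\ge 3$. I would observe that $z^2\equiv 1\pmod{2^\alpha}$ is equivalent to $2^\alpha\mid(z-1)(z+1)$. For odd $z$ the factors $z-1$ and $z+1$ are consecutive even numbers, so $\gcd(z-1,z+1)=2$ and exactly one of them is divisible by $4$. Hence one factor carries a single factor of $2$ while the other must absorb the remaining $2^{\alpha-1}$, forcing $z\equiv\pm1\pmod{2^{\alpha-1}}$. Each of these two congruences yields two residues modulo $2^\alpha$, namely $1,\ 2^{\alpha-1}-1,\ 2^{\alpha-1}+1,\ 2^\alpha-1$, so $N=4$.

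Combining the pieces gives $|\mathbb{Z}_n^{\ast 2}|=2^{\alpha-1}/4=2^{\alpha-3}$, which matches the cardinality of the right-hand side; together with the containment this proves the lemma. I expect the only delicate step to be the gcd/valuation bookkeeping in the count of the square roots of unity—everything else is a direct computation.
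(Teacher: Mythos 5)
Your proof is correct and follows essentially the same route as the paper: both establish the containment and then match cardinalities by showing each value in $\mathbb{Z}_n^{\ast 2}$ is hit by exactly $4$ units, the paper phrasing this as an equivalence relation ($a\sim b$ iff $a^2\equiv b^2$) with classes of size $4$, and you phrasing it as fibers of the squaring homomorphism with kernel of size $4$. In fact your explicit determination of the four square roots of unity $\pm 1,\ 2^{\alpha-1}\pm 1$ supplies exactly the detail the paper dismisses as ``easy to check,'' so your write-up is the more complete of the two.
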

\begin{proof}
Obviously, $\Big\{8k+1;k\in \{0,\ldots, \frac{n}{8}-1\}\Big\} \supseteq \mathbb{Z}_{n}^{\ast 2}$. It suffices to show that the set $\mathbb{Z}_{n}^{\ast 2}$ has exactly $n/8$ elements. Define the equivalence relation between odd elements of $\mathbb{Z}_{n}$ as follows.
We say $a \sim b$ if and only if $a^2\equiv b^2 \mod 2^\alpha$. It is easy to check that each equivalence class has exactly 4 elements. Hence the number of equivalence classes is $n/8$, which equals to the size of $\mathbb{Z}_{n}^{\ast 2}$.
\end{proof}
Now, we are able to find $\mathscr{S}_{sq}(\mathbb{Z}_{2^{\alpha}},c,k)$.
\begin{thm}\label{n2alpha}
Let $n=2^\alpha$. Then
\begin{displaymath}
\mathscr{S}_{sq}(\mathbb{Z}_{2^{\alpha}},c,k)= \left\{ \begin{array}{ll}
1,& \textrm{if $\alpha=1$ and $c \equiv k \mod 2$};\\
2^{k},& \textrm{if $\alpha=2$ and $c \equiv k \mod 4$};\\
2^{2k+(\alpha -3)(k-1)},& \textrm{if $\alpha>2$ and $c \equiv k \mod 8$};\\
0&\textrm{otherwise}.\
\end{array} \right.
\end{displaymath}
\end{thm}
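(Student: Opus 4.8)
The plan is to treat the three regimes $\alpha=1$, $\alpha=2$, and $\alpha>2$ separately, with the genuine content lying in the last case. For $\alpha=1$ the only unit is $1$ and $1^2=1$, so every tuple forces $x_1^2+\cdots+x_k^2 = k$; hence there is exactly one solution when $c\equiv k\pmod 2$ and none otherwise. For $\alpha=2$ the units are $\{1,3\}$ and both square to $1$ modulo $4$, so again the sum equals $k$ regardless of the choices, while each of the $k$ coordinates may independently be $1$ or $3$; this yields $2^k$ tuples when $c\equiv k\pmod 4$ and none otherwise. I would dispatch these two cases by direct inspection.

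For $\alpha>2$ the idea is to reduce the quadratic problem to a linear one. By Lemma \ref{8k}, $\mathbb{Z}_{n}^{\ast 2}$ is exactly the set of residues congruent to $1$ modulo $8$, of which there are $n/8 = 2^{\alpha-3}$. The equivalence relation in the proof of that lemma shows that the squaring map $x\mapsto x^2$ is $4$-to-$1$ on units, so each admissible value of $x_i^2$ is attained by exactly four units $x_i$. Writing $y_i = x_i^2$, a solution of $x_1^2+\cdots+x_k^2=c$ therefore corresponds to a tuple $(y_1,\ldots,y_k)$ of elements of $\mathbb{Z}_n^{\ast 2}$ summing to $c$, and each such tuple lifts back to $4^k = 2^{2k}$ solutions in the units. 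This accounts for the factor $2^{2k}$.

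It then remains to count the tuples $(y_1,\ldots,y_k)$ with each $y_i\equiv 1\pmod 8$ and $y_1+\cdots+y_k\equiv c\pmod{2^\alpha}$. Since each $y_i\equiv 1\pmod 8$, the sum is congruent to $k$ modulo $8$, which forces $c\equiv k\pmod 8$; when this fails there are no solutions, giving the last line of the formula. When $c\equiv k\pmod 8$, I would parametrize $y_i = 1+8z_i$ with $z_i$ ranging over a complete residue system modulo $2^{\alpha-3}$, so that the constraint becomes the single linear congruence $z_1+\cdots+z_k\equiv (c-k)/8 \pmod{2^{\alpha-3}}$. For any choice of $z_1,\ldots,z_{k-1}$ there is a unique $z_k$ completing the congruence, so the number of admissible tuples is $(2^{\alpha-3})^{k-1}=2^{(\alpha-3)(k-1)}$. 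Multiplying by the $2^{2k}$ lifts gives $2^{2k+(\alpha-3)(k-1)}$, as claimed.

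The only step requiring care is checking that $y_i = 1+8z_i$ is a genuine bijection between $\mathbb{Z}_n^{\ast 2}$ and a complete residue system modulo $2^{\alpha-3}$, and that $(c-k)/8$ is well-defined modulo $2^{\alpha-3}$; both follow from Lemma \ref{8k} together with the congruence $c\equiv k\pmod 8$. I do not anticipate a serious obstacle, since once the problem is linearized the count reduces to the standard fact that a single nontrivial linear equation over $\mathbb{Z}_{2^{\alpha-3}}$ in $k$ unknowns has $(2^{\alpha-3})^{k-1}$ solutions.
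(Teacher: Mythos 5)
Your proposal is correct and follows essentially the same route as the paper: the paper's proof also uses Lemma \ref{8k} to set up a $4^k$-to-$1$ surjection from solution tuples $(x_1,\ldots,x_k)$ onto tuples $(y_1,\ldots,y_k)$ satisfying the linearized congruence $8\sum y_i = c-k$, and then counts $(2^{\alpha-3})^{k-1}$ solutions of that linear equation. Your write-up merely makes explicit what the paper leaves terse (the $\alpha=1,2$ cases and the parametrization $y_i=1+8z_i$ behind the claim $|A|=(2^{\alpha-3})^{k-1}$), which is a faithful filling-in rather than a different argument.
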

\begin{proof}
Let $\alpha>2$. Let $A=\{ (y_1,\ldots,y_k); 8\sum_{i=1}^{k} y_i=c-k \}$ and $B= \{ (x_1,\ldots,x_k); \sum_{i=1}^{k} x_i^2=c \}$. Then by Lemma \ref{8k}, there exists a  $4^k$ to $1$ and onto map from $B$ to $A$.  It in easy to see that if $c \equiv k \mod 8$, then $|A|=(2^{\alpha -3})^{k-1}$, which establishes the formula.
\end{proof}
\begin{rmk}
Let $n=p_{1}^{\alpha_{1}}\ldots p_{t}^{\alpha_{t}}$. Then by Lemma \ref{product}, we conclude that $$\mathscr{S}_{sq} (\mathbb{Z}_{n},c,k)=\prod_{i=1}^{t} \mathscr{S}_{sq}(\mathbb{Z}_{p_{i}^{\alpha_{i}}},c,k),$$
which can be computed easily by Theorems \ref{4k1}, \ref{np4k3} and \ref{n2alpha}.
\end{rmk}

\bigskip
{\bf Acknowledgments.}\\ The author deeply thanks Dariush Kiani for encouragement. The author also thanks the referee for careful reading and useful comments.
{}


\begin{thebibliography}{10}

\bibitem{Brauer}
A. Brauer, L\H{o}sung der Aufgabe 30, Jahresber. Dtsch. Math.-Ver. 35 (1926) 92--94.

\bibitem{Godsil}
C. Godsil, G. Royle, Algebraic Graph Theory, Springer, New York,
2001.

\bibitem{Rocky}
D. Kiani, M. Mollahajiaghaei, On the addition of units and non-units in finite
commutative rings, Rocky Mountain J. Math.  45 (6) (2015), 1887--1896.

\bibitem{sander}
J. W. Sander, On the addition of units and nonunits mod m,
J. Number Theory., 129 (2009), 2260--2266.

\bibitem{toth}
L. T\'{o}th, Counting solutions of quadratic congruences in several variables revisited, J. Integer Seq. 17 (2014), Article 14.11.6.

\bibitem{yang}
Q. Yang, M. Tang, On the addition of squares of units and nonunits modulo $n$, J. Number Theory., 155 (2015) 1--12.

\end{thebibliography}
\end{document}